\newcommand{\bbE}{\mathbb{E}}
\newcommand{\bbX}{\mathbb{X}}
\newcommand{\bbP}{\mathbb{P}}
\newcommand{\bbR}{\mathbb{R}}
\newcommand{\eps}{\epsilon}
\newcommand{\ol}{\bar}
\newcommand{\wh}{\widehat}
\newcommand{\cA}{\mathcal A}
\newcommand{\cZ}{\mathcal Z}
\newcommand{\rmc}{{\rm c}}
\newcommand{\rmC}{{\rm C}}
\newcommand{\rmF}{{\rm F}}
\newcommand{\dd}{\,{\rm d}}
\newcommand{\supp}{\rm supp}
\newcommand{\sgn}{\rm sgn}
\newcommand{\wt}{\widetilde}
\newtheorem{theorem}{Theorem}
\newtheorem{lem}[theorem]{Lemma}
\newtheorem{prop}[theorem]{Proposition}
\title[LD for The Empirical Distribution in The BRW]{Large Deviations for The Empirical Distribution in The Branching Random Walk}
\author{Oren Louidor}
\address{Mathematics Department\\
 University of California Los Angeles\\
	Los Angeles, CA}
\email{louidor@math.uncla.edu}
\thanks{The research of the first author was supported by a Simons Postdoctoral Fellowship}
\author{Will Perkins}
\address{School of Mathematics\\
 Georgia Institute of Technology \\
	Atlanta, GA}
\email{perkins@math.gatech.edu}
\thanks{The research of the second author was supported in part by NSF grant OISE-0730136 and an NSF Postdoctoral Fellowship}
\begin{document}

\maketitle

\begin{abstract}
We consider the branching random walk $(Z_n)_{n \geq 0}$ on $\bbR$ where the underlying motion is of a simple random walk and branching is at least binary and at most decaying exponentially in law. It is well known that $\ol{Z_n}(A) \to \nu(A)$ almost surely as $n \to \infty$ for typical $A$'s, where $\ol{Z_n}$ is the empirical particles distribution at generation $n$ and $\nu$ is the standard Gaussian measure on $\bbR$. We therefore analyze the rate at which $\bbP(\ol{Z}_n(A) > \nu(A) + \epsilon)$ and $\bbP(\ol{Z}_n(A) < \nu(A) - \epsilon)$ go to zero for any $\epsilon>0$. We show that the decay is doubly exponential in either $n$ or $\sqrt{n}$, depending on $A$ and $\epsilon$ and find the leading coefficient in the top exponent. To the best of our knowledge, this is the first time such large deviation probabilities are treated in this model.
\end{abstract}

\section{Introduction and Results}
In this work we analyze the decay of probabilities of certain unlikely deviation events involving the Branching Random Walk (henceforth BRW). As far as we know, very little has been done in this direction, although, after optimal law of large numbers and central limit theorem type results have been fully obtained, both the question and the events we consider seem to us natural and fundamental. To fix notation and context, we begin by briefly describing the model (\ref{sub:Setup}) and giving a short account of some of the relevant results in its analysis (\ref{sub:OldResults}). A precise statement of the contribution in this paper then follows
(\ref{sub:NewResults}), and finally the idea in the proof of the main theorem is conveyed (\ref{sub:IdeaOfProof}). Complete proofs for all statements are given in Section~\ref{sec:Proofs}.

\subsection{Setup}
\label{sub:Setup}
The BRW model traces the evolution by means of reproduction and motion of a population of particles on the real line, carried out synchronously in discrete steps or {\em generations}. We denote by $Z_n$ (henceforth the {\em particles measure}) the population at time $n = 0,1, \dots$, which we describe as a point measure on $\bbR$ with a mass $1$ per particle. The process is formally defined as follows. Initially there is a single particle at the origin $Z_0 = \delta_0$. It evolves in one generation to a random point measure $Z_1$. Although one may consider any law for $Z_1$, often and in this paper as well, attention is restricted to evolution by means of independent reproduction and motion. That is, $Z_1$ is realized by the particle giving birth to a random number of descendants, dying, and then all descendants independently of each other and of their number moving according to some common spatial distribution $F$.

At any further generation $n \geq 2$ we have (conditioned on $Z_{n-1}$),
\begin{equation}
  Z_n = \sum_{x \in Z_{n-1}} \wt{Z}^x_1 \,,
\end{equation}
where $\wt{Z}^x_1(\cdot)$ has the same distribution as $Z_1(\cdot - x)$ and
$\{\wt{Z}^x_1 :\: x \in Z_{n-1}\}$ are independent. Here and later, for a point measure $\zeta$ with integer masses, we write $x \in \zeta$ iff $x$ is an atom of $\zeta$, that is if $\zeta(x) := \zeta(\{x\}) > 0$. We use $(x :\: x \in \zeta)$ for the multi-set of atoms of $\zeta$, where each atom $x$ is repeated $\zeta(x)$ times. Moreover, if this multi-set is used as an index set (as above), different copies of the same atom are considered different indices. 

Despite the old age of this model it is still quite central in pure and applied probability. It remains a popular model for describing and analyzing phenomena in various applied disciplines, such as biology, population dynamics and computer science. At the same time, due to the fundamentality of the stochastic dynamics it captures, it 
is frequently found in various seemingly unrelated mathematical models (e.g. the Gaussian Free Field~\cite{ZB2012}, Interacting Particle System~\cite{Liggett}). Finally, there are aspects of the model which are still not understood or only beginning to be understood now (e.g. its extremal process~\cite{Bovier}). For the classical theory of BRW, we direct the reader to the survey by Ney~\cite{ney} and the books by R\'{e}v\'{e}sz~\cite{revesz} and Harris~\cite{Harris}.

\subsection{Known Results}
\label{sub:OldResults}
Since the population-size process $(|Z_n|)_{n \geq 0} = (Z_n(\bbR))_{n \geq 0}$ is a standard Galton Watson process,
it is well known that once reproduction is super-critical
\begin{equation}
\label{eqn:supercritical}
  \beta := \bbE |Z_1| > 1 
\end{equation}
and assuming 
\begin{equation}
\label{eqn:KS}
  \bbE |Z_1| \log |Z_1| < \infty
\end{equation}
then for the {\em normalized particles measure} $\wh{Z}_n = \beta^{-n} Z_n$ we have
almost surely
\begin{equation}
\label{e:1003}
  \lim_{n \to \infty} |\wh{Z}_n| = |\wh{Z}| \,,
\end{equation}
where $|\wh{Z}|$ is some non-negative random variable with $\bbE|\wh{Z}| = 1$.
The optimal version of this theorem is due to Kesten and Stigum~\cite{KestenStigum66}. If $\beta \leq 1$, the population dies out with probability $1$; hence from now on, we shall assume
\eqref{eqn:supercritical}.

When displacement is considered as well, an analogous result to the above, conjectured by Harris~\cite{Harris}, first proved by Stan~\cite{Stam66}, and then proved under optimal conditions by Kaplan~\cite{Kaplan82} is 
\begin{equation}
\label{e:1004}
  \lim_{n \to \infty} \wh{Z}_n(\sqrt{n}A) = |\wh{Z}| \nu(A)
    \qquad \bbP \text{-a.s.} \,,
\end{equation}
Here $A \in \cA_0$ where
\begin{equation}
\label{eqn:10045}
  \cA_0 := \{(-\infty, x] :\: x \in \bbR\} \, ,
\end{equation}
$\nu$ is the standard Gaussian measure on $\bbR$, and the assumptions are \eqref{eqn:supercritical}, \eqref{eqn:KS} for branching, and zero mean and unit variance for the motion, that is
\begin{equation}
  \int x \dd \rmF(x) = 0	\quad ; \qquad \int x^2 \dd \rmF(x) = 1 \, .
\end{equation}
Combining \eqref{e:1003} and \eqref{e:1004} and denoting the {\em empirical particles distribution} by $\ol{Z}_n = Z_n / |Z_n|$, we have
\begin{equation}
\label{eqn:1007}
  \lim_{n \to \infty} \ol{Z}_n(\sqrt{n}A) = \nu(A) \,.
\end{equation}

Once leading order asymptotics \eqref{e:1003}, \eqref{e:1004} have been obtained, second-order terms, or the question of the rate of the convergence, can be approached. For the population size, Heyde~\cite{Heyde71} has shown that under $\bbE |Z_1|^2 < \infty$, for some (explicit) $\alpha_0 > 0$, as $n \to \infty$
\begin{equation}
  \alpha_0 |\wh{Z}|^{-1/2}\beta^{n/2}(|\wh{Z}_n| - |\wh{Z}|) \Rightarrow N(0, 1) \,.
\end{equation}
For the particles measures, more recently Chen~\cite{chen2001exact} has proved that for all $A \in \cA_0$,
\begin{equation}
  \sqrt{n} \big(\wh{Z}_n(\sqrt{n}A) - |\wh{Z}|\nu(A) \big) = \varphi_1(n)|\wh{Z}| + \alpha_1 \wh{M} + o(1) \,,
\end{equation}
as $n \to \infty$, where $\alpha_1 > 0$, $\varphi_1(\cdot)$ is a bounded function, and $\wh{M}$ is some random variable - all explicitly defined. In the case he considered, motion is of a simple random walk and branching admits the same assumptions as in Heyde's.

Having settled the main questions in the ``typical deviations'' regime, it is natural to turn to the regime of atypical or large deviations. Results here are not as abundant. For $|Z_n|$, Athreya~\cite{ATH1} has considered the following probabilities:
\begin{equation}
  \bbP \big(\big| |Z_{n+1}|/|Z_n| - \beta \big| > \Delta \big) \quad \text{and} \qquad 
  \bbP \big(\big| |\wh{Z}_n| - |\wh{Z}| \big| > \Delta \big) \,,
\end{equation}
for $\Delta > 0$ and under the assumptions of exponential moments and $|Z_1| \geq 1$. If 
$p := \bbP(|Z_1| = 1) > 0$, he showed that the probability on the left is 
\begin{equation}
\label{eqn:10075}
\lambda_0(\Delta) p^n (1+o(1)) 
\end{equation}
for some explicitly defined $\lambda_0(\Delta)> 0$ and otherwise, it is at most 
\begin{equation}
\alpha_1(\Delta) \exp (-\lambda_1(\Delta) b^n) \,,
\end{equation}
where $b$ is the first integer for which $\bbP(|Z_1| = b) > 0$ and $\lambda_1(\Delta), \alpha_1(\Delta) > 0$. For the probability on the right, he obtained the bound
\begin{equation}
  \bbP \big(\big| |\wh{Z}_n| - |\wh{Z}| \big| > \Delta \big) \leq C \exp \big( 	-C^\prime \Delta^{2/3} (\beta^{1/3})^n \big) \,.
\end{equation}
Above $C, C^\prime > 0$ are some universal constants. See also~\cite{NeyVidyashankar2004}. Different atypicality is treated by Jones~\cite{Jones04} and Biggins and Bingham~\cite{BigginsBingham93} who investigate the left and right tail of $|\wh{Z}|$.

For the BRW, much effort has been directed into estimating the number of particles which deviate linearly away from the mean displacement in the underlying motion. It is a classical result by Biggins~\cite{biggins1977chernoff} that for any $A \in \cA_0$,
\begin{equation}
  \lim_{n \to \infty} n^{-1} \log Z_n(nA) = -\inf_{x \in A} \Lambda^*(x)
    \qquad \bbP \text{-a.s.} \,,
\end{equation}
if the r.h.s. is positive and otherwise $Z_n(nA) \to 0$ a.s. Here
$\Lambda^*$ is the Legendre-Fenchel transform of $\Lambda(\theta) = \log \bbE \int e^{\theta x} \dd Z_1(x)$, which is assumed to be finite. This can be also used to obtain the speed of the left (or right) most particle as $\inf \{x: \: \Lambda^*(x) < 0\}$, although to obtain sharper results, different methods have been used (c.f. Brahmson~\cite{Bramson78BM, Bramson83}, and Addario-Berry and Reed~\cite{AddarioBerryReed09}). 

Perhaps closest to the type of large-deviation analysis we do here is the result by Athreya and Kang in~\cite{AthreyaKang98}, where instead of a motion in $\bbR$, particles move according to some positive-recurrent Markov chain with invariant measure $\pi$. Along with a local version of \eqref{eqn:1007}, they find that the probability that at time $n$ the fraction of particles at state $s$ is at least $\Delta > 0$ away from $\pi(s)$ decays exponentially as $\lambda(\Delta) p^n$ for some explicit $\lambda(\Delta) >0 $ and with $p$ as in \eqref{eqn:10075}, which is assumed
to be positive Nevertheless, this is still quite far from what we do here. First, random walk is typically null recurrent (unless degenerate). Second, there is no spatial component (e.g. CLT-type phenomenon) to their problem. Third, we in fact assume $p_1 = 0$ and thus obtain very different decay scales.

\subsection{New Results}
\label{sub:NewResults}
In this work we analyze large deviation probabilities of the form:
\begin{equation}
\label{eqn:1008}
  \bbP(|\ol{Z}_n(\sqrt{n}A) - \nu(A)| > \Delta) \,.
\end{equation}
for some $\Delta > 0$. In light of \eqref{eqn:1007}, the above clearly decays in $n$ and we aim to understand how fast. 

\subsubsection*{Assumptions.}  We make the following assumptions. For branching, we shall assume that $|Z_1|$ is non-deterministic, that $\bbE e^{\theta |Z_1|} < \infty$ for $\theta$ in some neighborhood of $0$ and that $\bbP(|Z_1| \geq 2) = 1$. The last condition guarantees that exponential growth of the population size is unavoidable. Although the case of $\bbP(|Z_1| \geq 2) < 1$ is an interesting problem, it is of a different nature as it permits using strategies which suppress the branching in order to realize large deviation events. This will result in a different scale for the decay in \eqref{eqn:1008}. For the underlying motion, we shall assume simple random walk steps. The precise step distribution will not change the result, as long as it has mean zero and bounded or sufficiently decaying tails. Again, allowing for steps with fat tails would have given rise to strategies which exploit these tails for achieving the unlikely events, resulting again in a problem of a different nature and a different scale for the decay of \eqref{eqn:1008}.

We are now ready to state our main result. Let $\cA$ be the algebra generated by $\cA_0$ (defined in \eqref{eqn:10045}). For $A \in \cA$ non-empty and $p \in (0,1)$ define
\begin{eqnarray}
 \tilde{I}_A(p) & = &  \inf \{ |x| :\: \nu( A-x) \geq p \,,\,\, x \in \bbR\} \, , \\
 \tilde{J}_A(p) & = &
    \inf \big\{r: \:  \sup_{x \in \bbR}  \nu \big((A-x)/\sqrt{1-r} \big) \geq p \,,\,\, r \in [0,1) \big\} 
\end{eqnarray}
and with $b=\min\{k : \: \bbP(|Z_1| = k) > 0\} \geq 2$, set
\begin{eqnarray}
  I_A(p) & = 	& (\log b)\tilde{I}_A(p) \,,\, \quad \qquad \qquad \qquad \qquad \qquad \qquad \qquad \\
  J_A(p) & = 	& (\log b) \tilde{J}_A(p) \, .
\end{eqnarray}
Then,
\begin{theorem}
\label{thm:GeneralCase}
For $A \in \cA$ non-empty and $p \in (0,1)$ such that $p > \nu(A)$,
\begin{equation}
\label{e:1}
  \log \big[-\log \bbP \big(\ol{Z}_n(\sqrt{n}A) \geq p \big)\big] \sim \left \{
    \begin{array}{lcl}
      I_A(p)\,\sqrt{n} 	& \quad & \text{if } I_A(p) < \infty	\\
      J_A(p)\,n 	& 	& \text{otherwise.}
    \end{array}
    \right.
\end{equation}
as $n \to \infty$.
\end{theorem}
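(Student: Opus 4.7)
The plan is to prove matching lower and upper bounds on $\bbP(\ol Z_n(\sqrt n A)\ge p)$, with the two regimes treated in parallel: the only difference is the choice of the cut-off generation $k$, which will be of order $\sqrt n$ when $I_A(p)<\infty$ and of order $n$ otherwise. The constant $\log b$ in front always reflects the deterministic lower bound $|Z_k|\ge b^k$ arising from the assumption $\bbP(|Z_1|\ge 2)=1$.

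For the lower bound on the probability (explicit construction), in the first regime I would pick $x$ with $|x|$ slightly above $\tilde I_A(p)$ and $\nu(A-x)>p$, set $k=\lceil |x|\sqrt n\rceil$, and consider the event $B_n$ that for every generation $j<k$ each particle has exactly the minimum number $b$ of offspring (each with probability $\bbP(|Z_1|=b)>0$) and every child takes the step $\sgn(x)$. Then $|Z_k|=b^k$, all particles sit at $\sgn(x)\,k\approx x\sqrt n$, and $\bbP(B_n)$ is a product of terms of the form $\bbP(|Z_1|=b)^{|Z_j|}\cdot 2^{-|Z_{j+1}|}$, telescoping to $\exp(-O(b^k))=\exp\bigl(-\exp(I_A(p)\sqrt n\,(1+o(1)))\bigr)$. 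Conditional on $B_n$, the residual evolution is the superposition of $b^k$ independent BRWs started at $\sgn(x)\,k$, so applying~\eqref{eqn:1007} to the aggregate gives $\ol Z_n(\sqrt n A)\to\nu(A-x)>p$ in probability; this yields the desired upper bound on $\log(-\log\bbP)/\sqrt n$. The second regime is analogous but with $k=\lceil rn\rceil$ for $r$ slightly above $\tilde J_A(p)$ and a prescribed net drift, so that the $b^{rn}$ concentrated particles land at $x\sqrt n$ with $\nu((A-x)/\sqrt{1-r})>p$, and the remaining $(1-r)n$ free generations realize the event through~\eqref{eqn:1007} at scale $\sqrt{(1-r)n}$.

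For the matching upper bound on the probability, take $k=\lfloor(\tilde I_A(p)-\eps)\sqrt n\rfloor$ in the first case and $k=\lfloor(\tilde J_A(p)-\eps)n\rfloor$ in the second, and condition on $\mcF_k=\sigma(Z_0,\dots,Z_k)$. Writing the positions at generation $k$ as $y_1,\dots,y_m$ with $m=|Z_k|$, the event $\{\ol Z_n(\sqrt n A)\ge p\}$ is precisely $\sum_iW_i\ge 0$ with
\[
W_i=Z^{(i)}_{n-k}(\sqrt n A-y_i)-p\,|Z^{(i)}_{n-k}|,
\]
and the $Z^{(i)}_{n-k}$ are conditionally i.i.d.\ BRWs. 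The first-moment identity $\bbE Z_m(C)=\beta^m\bbP(S_m\in C)$ together with the CLT/Berry--Esseen gives $\bbE W_i=\beta^{n-k}(\bbP(S_{n-k}\in\sqrt n A-y_i)-p)\le-\eta\,\beta^{n-k}$ uniformly in $y_i$, for some $\eta=\eta(\eps)>0$: in the first case because $|y_i|\le k$ forces $|y_i/\sqrt n|\le\tilde I_A(p)-\eps$ and hence $\nu(A-y_i/\sqrt n)<p-\eta$ by the definition of $\tilde I_A(p)$, while in the second case one uses $\sup_z\nu((A-z)/\sqrt{1-k/n})<p-\eta$ whenever $k/n<\tilde J_A(p)-\eps$. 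The assumed exponential moments of $|Z_1|$ (together with Kesten--Stigum) make $W_i/\beta^{n-k}$ uniformly sub-exponential with strictly negative mean, so a standard Chernoff argument yields $\bbP(\sum_iW_i\ge 0\mid\mcF_k)\le e^{-m\Lambda^*}$ for some constant $\Lambda^*>0$. Since $|Z_k|\ge b^k$ almost surely, integrating gives $\bbP(\ol Z_n(\sqrt n A)\ge p)\le\exp(-\Lambda^* b^k)$, which produces $\log(-\log\bbP)/\sqrt n\ge(1-\eps)I_A(p)+o(1)$ in the first case and the analogous bound at scale $n$ in the second; letting $\eps\downarrow 0$ closes the gap.

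The main obstacle is the uniform control of $\bbE W_i$ across all feasible position profiles $(y_i)$, and this is where the simple-random-walk assumption is essential: the deterministic inequality $|y_i|\le k$ is what forces the conditional expected fraction of descendants landing in $\sqrt n A$ to stay strictly below $p$ when $k<\tilde I_A(p)\sqrt n$. The second regime additionally needs quantitative continuity of $r\mapsto\sup_z\nu((A-z)/\sqrt{1-r})$, which follows from dominated convergence but requires some care when $A$ has several components. A secondary technical point is the uniform (in $n$) exponential integrability of $|Z^{(i)}_{n-k}|/\beta^{n-k}$, needed so that the Chernoff rate $\Lambda^*$ stays bounded away from zero.
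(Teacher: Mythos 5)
The upper bound in your proposal is in essence the paper's own argument: condition on $\mcF_k$ with $k$ slightly below the target cutoff, observe that every offspring position $y_i$ satisfies $|y_i|\le k$ (in the $I$-case) so that the expected post-conditional fraction is uniformly below $p$, and close with a uniform Chernoff bound over $|Z_k|\ge b^k$ conditionally i.i.d.\ terms; the paper packages the Chernoff step as Lemma~\ref{lem:UnifCramer} and Lemma~\ref{lem:ConcenNormalPop}.

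Your lower bound, however, has a genuine gap in the $I_A(p)<\infty$ regime. You pick $x$ with $|x|$ slightly above $\tilde I_A(p)$ and $\nu(A-x)>p$ \emph{strictly}, steer all $b^k$ particles to position $\approx x\sqrt n$, and then argue the aggregate fraction concentrates above $p$. The problem is that such an $x$ need not exist with $|x|$ close to $\tilde I_A(p)$, and in fact may not exist at all. Take $A=[0,1]$ and $p=\sup_{y}\nu(A-y)=\nu([-\tfrac12,\tfrac12])$; then $\nu(A)<p<1$ and $\tilde I_A(p)=\tfrac12<\infty$, yet $\{y:\nu(A-y)>p\}=\emptyset$. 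Your shift strategy produces no admissible $x$, so you would be forced into the dilation regime and obtain a lower bound on the probability of order $\exp(-e^{cn})$ rather than $\exp(-e^{c\sqrt n})$ — a mismatch by a full power of $n$ in the top exponent. More generally, whenever the optimizer $x_0$ with $|x_0|=\tilde I_A(p)$ satisfies $\nu(A-x_0)=p$ exactly and $\nu(A-\cdot)$ does not exceed $p$ just past $x_0$, choosing $|x|$ slightly larger gives $\nu(A-x)<p$, so the expected aggregate fraction lies strictly below $p$ and the concentration of the aggregate (which you invoke via \eqref{eqn:1007}, and which really requires a law-of-large-numbers step across the $b^k$ copies such as Lemma~\ref{lem:ConcenNormalPop}) works \emph{against} you.

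The paper's lower bound avoids this by taking $|x|=\tilde I_A(p)$ exactly (Proposition~\ref{prop:supx} guarantees only $\nu(A-x)\ge p$, possibly with equality) and then bounding the second factor by $\bigl(\bbP(\ol Z_m(\sqrt n A - w)\ge p)\bigr)^{|\zeta|}$, i.e.\ by demanding each of the $b^{|w|}$ \emph{individual} sub-BRWs have fraction $\ge p$. That only requires each such single-BRW probability to be bounded below by a constant $\epsilon>0$ uniformly, which is the content of Lemma~\ref{lem:TypicalDev}: a single BRW exceeds its typical fraction by $t/\sqrt m$ with probability $\ge\epsilon$ uniformly in $m$. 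This lemma, proved via Chen's second-order asymptotics and the unboundedness of $\wh M/|\wh Z|$, is the missing ingredient in your argument; without it, you cannot handle the case where the supremum $\sup_y\nu(A-y)$ is exactly achieved and equals $p$.
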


Replacing $A$ with $A^c$ in Theorem~\ref{thm:GeneralCase}, 
one has 

\medskip
\noindent {\bf Theorem 1$^\prime$}.
For all $A \in \cA \setminus \bbR$ and $p \in (0,1)$ such that
$p < \nu(A)$,
\begin{equation}
\label{e:1a}
  \log \big[-\log \bbP \big(\ol{Z}_n(\sqrt{n}A) \leq p \big)\big] \sim \left \{
    \begin{array}{lcl}
      I_{A^c}(1-p)\,\sqrt{n}	& \quad & \text{if } I_{A^c}(1-p) < \infty	\\
      J_{A^c}(1-p)\, n		& 	& \text{otherwise.}
    \end{array}
    \right.
\end{equation}
as $n \to \infty$.

\medskip 
As follows from Proposition~\ref{prop:supx} below, for $A$ and $p$ as in 
the conditions of the theorems either $I_A(p) \in (0,\infty)$ or $I_A(p) = \infty$ 
and $J_A(p) \in (0,\log b)$. Thus on a double-exponential scale, Theorem~\ref{thm:GeneralCase} and 1$^\prime$ capture the right first-order asymptotics 
for the decay of the probability of a large deviation in the empirical distribution
for such $A$'s and $p$'s.
 
The statement in the theorem still holds if we replace
the weak inequality in \eqref{e:1} or \eqref{e:1a} by a strong one. Our proof for the 
lower bound on $\bbP(\ol{Z}_n(\sqrt{n}A) \geq p)$ essentially works for 
$\bbP(\ol{Z}_n(\sqrt{n}A) > p)$.

The restriction to intervals of the form $(-\infty ,x]$, $(y, x]$ and
$(y, \infty)$ in $\cA$ is quite arbitrary and the theorem still holds if $\cA$
is the algebra generated by sets of the form $(\infty, x)$ or more generally,
the set of all finite unions of disjoint intervals which either contain their endpoints or do not, or contain only one of them and can be finite or infinite,
but as long as their interior is non-empty. 

On the other hand, \eqref{e:1} cannot be expected to hold for all Borel sets, nor even all continuity sets of $\nu$. Indeed, the following shows 
that there are simple enough sets for which the decay in \eqref{eqn:1008} has neither linear nor radical rate on a double exponential scale.
\begin{prop}
\label{prop:Interpolation}
For all $\alpha \in (1/2,1)$ and $p \in (0,1)$, there exists a set $A$, which is a countable union of disjoint finite intervals, such that
\begin{equation}
\label{eqn:111}
 \log \big[-\log \bbP \big(\ol{Z}_n(\sqrt{n}A) \geq p \big)\big] \sim  n^{\alpha}  
\end{equation}
\end{prop}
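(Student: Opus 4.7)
The plan is to construct $A$ as a countable union $A = \bigcup_{k \geq k_0} A_k$ of finite intervals engineered so that the Pareto frontier of feasible shift--scale pairs $(x,r)$ realizing $\{\ol Z_n(\sqrt n A) \geq p\}$ has cost envelope of order $n^\alpha$ on the double-exponential scale. Set $\gamma := 2(1-\alpha)/(2\alpha-1) \in (0,\infty)$ and choose $q>0$ with $\nu([-q,q]) = p$, then take
\[
 x_k := L k, \qquad r_k := C k^{-\gamma}, \qquad A_k := \bigl[x_k - q\sqrt{1-r_k},\; x_k + q\sqrt{1-r_k}\bigr], \qquad A := \bigcup_{k \geq k_0} A_k,
\]
with $L > 2q$ (to make the $A_k$'s eventually disjoint) and $C > 0$ a tuning constant to be fixed later. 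The design principle is that shifting $A$ by $x_k$ and dilating by $(1-r_k)^{-1/2}$ sends $A_k$ exactly onto $[-q,q]$ of $\nu$-measure $p$, putting $(x_k,r_k)$ on the feasibility frontier $\{(x,r): \nu((A-x)/\sqrt{1-r}) \geq p\}$; conversely, the disjointness and narrowness of the $A_k$'s force any frontier point to lie within $o(1)$ of some $(x_k,r_k)$. A short calculus exercise yields
\[
 \inf_{k \geq k_0}\bigl[(\sqrt n\, x_k) \vee (n r_k)\bigr] \;\sim\; L^{\gamma/(\gamma+1)}\, C^{1/(\gamma+1)} \, n^\alpha\,,
\]
attained at $k^*(n) \asymp n^{1/(2(\gamma+1))}$ where the two terms balance; I tune $C$ so that the leading constant equals $1/\log b$.

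For the lower bound $\bbP(\ol Z_n(\sqrt n A) \geq p) \geq \exp(-\exp((1+o(1))n^\alpha))$, I would adapt the achievability construction from the proof of Theorem~\ref{thm:GeneralCase} to a \emph{combined} shift-and-scale strategy with parameters $(x_{k^*(n)}, r_{k^*(n)})$: run the pure-shift part of the Theorem~1 strategy over the first $\lceil\sqrt n\, x_{k^*(n)}\rceil$ generations, then the pure-scale (minimum-branching) part of the Theorem~1 strategy over the next $\lceil r_{k^*(n)} n \rceil$ generations, then let the BRW run freely. On the intersection of these two independent conditioning events, the empirical distribution at generation $n$ rescaled by $\sqrt n$ is approximately $N(x_{k^*(n)}, 1-r_{k^*(n)})$, which places mass $p$ on $A_{k^*(n)} \subseteq A$; the event itself has probability at least $\exp(-c_1 b^{\sqrt n\, x_{k^*(n)}} - c_2 b^{n r_{k^*(n)}})$, so $\log[-\log \bbP] \leq (\log b)(\sqrt n\, x_{k^*(n)} \vee n r_{k^*(n)}) + O(1) \sim n^\alpha$. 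For the upper bound $\bbP \leq \exp(-\exp((1-o(1))n^\alpha))$, I would generalize the moment/union-bound argument from the converse half of Theorem~\ref{thm:GeneralCase} to the countable family $\{A_k\}$: any realization of the event implicitly pins down a frontier pair $(x,r)$ which, by the frontier analysis above, is $o(1)$-close to some $(x_k,r_k)$, and the cost of the corresponding BRW conditioning is then $\geq (\log b)(1-o(1))\min_k(\sqrt n\, x_k \vee n r_k) = (1-o(1))n^\alpha$.

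The main obstacle is this extension of Theorem~\ref{thm:GeneralCase} to combined shift-scale strategies whose parameters grow with $n$: the theorem treats pure shift ($r=0$) and pure scale (shift cost absorbed into the error) separately, because for $A \in \cA$ exactly one of them dominates. Here $\sqrt n\, x_{k^*(n)}$ and $n r_{k^*(n)}$ balance at order $n^\alpha$, so the shift cost can no longer be swallowed by an error term as in the pure-scale case, and the error terms of both halves of the Theorem~1 proof must be controlled simultaneously and uniformly over $k$ in the relevant range. The converse direction in particular requires arguing that no ``mixed'' frontier point far from every $(x_k,r_k)$ can beat the discrete minimum, which is where most of the technical work will go.
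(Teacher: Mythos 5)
Your construction has a fatal flaw in the choice of spacing $x_k = Lk$, and it is instructive to see why. With linearly spaced centers and interval widths $2q\sqrt{1-r_k} \to 2q$ (since $r_k = Ck^{-\gamma} \to 0$), the Gaussian mass ``leaking'' into a window around $x_k$ from the neighboring intervals $A_{k\pm 1}, A_{k\pm 2}, \ldots$ is bounded below by a fixed positive constant, roughly $2\sum_{m\geq 1}\int_{mL-q}^{mL+q}\phi(t)\,\dd t > 0$, independent of $k$. Meanwhile the deficit $p - \nu(A_k - x_k) \asymp r_k = Ck^{-\gamma}$ tends to $0$. Hence $\nu(A - x_k) > p$ for all $k$ large enough, which means $\tilde{I}_A(p) < \infty$: a pure shift by a \emph{bounded} amount already realizes the event, and the $I_A(p)<\infty$ mechanism from Theorem~\ref{thm:GeneralCase} then gives $\log[-\log\bbP(\ol{Z}_n(\sqrt{n}A)\geq p)] = O(\sqrt{n}) = o(n^\alpha)$ for every $\alpha>1/2$. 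Your set therefore cannot satisfy \eqref{eqn:111}. The paper sidesteps this exactly by using superlinearly spaced centers $x_k = k^{1+\delta}$: the gaps grow like $k^\delta$, so the leakage into a window around $x_k$ is of order $e^{-ck^{2\delta}}$, eventually beaten by the polynomial deficit $\asymp k^{-(1-\alpha)(1+\delta)/(\alpha-1/2)}$. This growth of the gaps is the one nontrivial feature of the construction, and it is exactly what your proposal is missing.

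Two secondary remarks. First, the combined shift-then-dilate strategy you propose for the lower bound is unnecessary: a pure shift of $w \asymp n^\alpha$ generations already consumes a fraction $\asymp n^{\alpha-1}$ of the available time, so the remaining $n - w$ steps produce a natural variance contraction of exactly the size needed, and the paper's construction ties the interval widths to this natural contraction so that a single shift strategy suffices. (In fact your own balance condition $\sqrt{n}\,x_{k^*} = n r_{k^*}$ is precisely the statement that the pure shift of length $\sqrt n\, x_{k^*}$ gives contraction $r_{k^*}$ for free, which you could have exploited.) Second, your upper bound is a sketch rather than a proof: the paper's converse argument conditions on the particle measure at generation $\lfloor(1-\eps)n^\alpha\rfloor$, decomposes $A$ around the nearest $A_k$, and plays the (decaying) leakage bound against the deficit term and the natural dilation, using Lemma~\ref{lem:ConcenNormalPop}; none of the quantitative content of this step appears in your proposal, and you acknowledge this yourself. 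The gap in the spacing, however, is the issue that must be fixed first, since with $x_k = Lk$ the statement is simply false for the set you built.
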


Similarly, the restriction in our main theorem to values of $p$ in $(0,1)$ is essential.
In Theorem~\ref{thm:GeneralCase}, for instance, in the case $p=0$ the probability in the l.h.s. of \eqref{e:1} does not decay, and for certain sets in $\cA$, the case $p=1$ cannot be handled by the current proof nor a straightforward modification of it.

\subsection{Idea of Proof}
\label{sub:IdeaOfProof}
It is usually the case in the realm of large deviations that obtaining decay asymptotics for probabilities of unlikely events amounts to finding (and proving that it is such) an optimal (that is least ``costly'' in terms of probability) ``strategy'' for realizing the unlikely event. Consider therefore $A \in \cA$ and $p \in (\nu(A),1)$ as in the conditions of Theorem~\ref{thm:GeneralCase}. What is the optimal strategy for having at least $p$ fraction of the population in the set $\sqrt{n}A$ at time $n$ instead of the likely $\nu(A)$?

As it turns out, among all possible strategies one needs to consider only two: a {\em shift} strategy and a {\em dilation} strategy. In the former, all particles move together in either the left or right direction for $w=|x|\sqrt{n}$ generations (up to integer rounding, $x \in \bbR$). This can be done with probability 
$\exp(-b^{|x|\sqrt{n}(1+o(1)})$ by keeping the number of particles at its minimum. Relative to the position of the particles at generation $w$, the target set has now ``shifted'' by $-x\sqrt{n}$. Therefore after dividing by the CLT scaling of $\sqrt{n}$, each particle at generation $w$ will typically have (asymptotically) a fraction of $\nu(A-x)$ of its descendants in $\sqrt{n}A$, and this will also be the fraction for the entire population. Consequently, if there exists $x$ for which $\nu(A-x) \geq p$, this strategy will realize the event $\{\ol{Z}_n(\sqrt{n}A) \geq p\}$ at the sole cost of ``steering'' the population for $w$ generations. This cost is $\exp (-e^{I_A(p)\sqrt{n}(1+o(1)})$ once $x$ is chosen closest to $0$.

If there is no $x$ for which $\nu(A-x) \geq p$, a ``dilation'' strategy is employed, whereby all particles move together for $w^\prime = r^\prime n +x^\prime\sqrt{n}$
generations ($x^\prime \in \bbR$, $r^\prime \in (0,1)$) such that at generation $w^\prime$ they are all at position $x^\prime \sqrt{n}$ . If $r^\prime, x^\prime$ are chosen such $\nu((A-x^\prime )/\sqrt{1-r^\prime }) \geq p$ then as in the shift case, the typical overall fraction in $\sqrt{n}A$ at a large time $n$ will be at least $p$. The probabilistic cost of this strategy is therefore incurred just in the first $w^\prime$ generations, and by keeping reproduction at its minimum, it can be $\exp(-b^{r^\prime n (1+o(1))})$. Choosing the smallest $r^\prime $ possible, $\{\ol{Z}_n(\sqrt{n}A) \geq p\}$ can be achieved by a strategy which has probability $\exp(-e^{-J_A(p)n(1+o(1)})$.

Of course these strategies only give lower bounds for the probability in question. One therefore must also show that other strategies would not cost less. In addition, to make the above heuristics precise, our proof requires certain uniform estimates for the probabilities of finding typical fractions as well as coarse (a priori) estimates for finding atypical ones.

\section{Proofs}
\label{sec:Proofs}
In this section we provide proofs for the statements in (\ref{sub:NewResults}). We first introduce further notation (\ref{sub:Notation}) which will be used in the proofs then prove various preliminary statements (\ref{sub:Perliminiaries}) which are required in order to make the ideas from (\ref{sub:IdeaOfProof}) precise. We then prove the main theorem (\ref{sub:ProofOfMainTheorem}) and finally prove Proposition~\ref{prop:Interpolation} (\ref{sub:countersection}). 

\subsection{A bit more notation}
\label{sub:Notation}
The space of all particles measures, that is, finite point measures on $\bbR$ with integer masses, will be denoted by $\cZ$. For $\zeta \in \cZ$, we denote by
$(Z_n^\zeta)_{n \geq 0}$ a BRW process with a similar evolution as $(Z_n)_{n \geq 0}$, only that initially $Z_0 = \zeta$. We will write $Z_n^x$ in place of $Z_n^{\delta_x}$ for short. $\nu_n$ is the distribution of the position of a simple random walk after $n$ steps. For $u \in \bbR$, as usual, $u^+ = \max(0, u)$ and $u^- = -(-u)^+$. We will use $C$, $C^\prime$, $C^{\prime\prime}$ to denote positive constants whose value is immaterial and changes from one use to the other. Constant values which are used more than once are denoted $C_0, C_1,..$, and their values become fixed the first time they appear in the text.

\subsection{Preliminaries}
\label{sub:Perliminiaries}
\begin{prop}
\label{prop:supx}
Let $A \in \cA$ be non-empty and $p \in (0,1)$.
\begin{enumerate}
  \item \label{prop:supx0}
    $(\rho, \xi) \mapsto \nu(\rho A + \xi) \in \rmC^{\infty}(\bbR^2)$.
  \item \label{prop:supx1}
    If $\tilde{I}_A(p) \in [0, \infty)$ then there exists $x \in \bbR$ with $|x| = \tilde{I}_A(p)$ such that
    \begin{equation}
      \nu(A-x) \geq p \, .
    \end{equation}
  \item \label{prop:supx2}
    $\tilde{J}_A(p) \in [0,1)$ and there exists $x \in \bbR$ such that with $r=\tilde{J}_A(p)$
    \begin{equation}
      \label{e:3}
      \nu \big((A-x)/\sqrt{1-r} \big) \geq p \, .
    \end{equation}
  \item \label{prop:supx3}
    If $p > \nu(A)$ then either $\tilde{I}_A(p) \in (0,\infty)$ or
    $\tilde{I}_A(p) = \infty$, $\tilde{J}_A(p) \in (0,1)$.
\end{enumerate}
\end{prop}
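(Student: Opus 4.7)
\textbf{Proof plan for Proposition~\ref{prop:supx}.}
For part~(1), write $A \in \cA$ as a finite disjoint union $A = \bigcup_{i=1}^k (a_i, b_i]$ with $a_i, b_i \in [-\infty, \infty]$. For $\rho > 0$,
\begin{equation*}
\nu(\rho A + \xi) = \sum_{i=1}^k \bigl[\Phi(\rho b_i + \xi) - \Phi(\rho a_i + \xi)\bigr],
\end{equation*}
where $\Phi$ is the standard Gaussian CDF, with the conventions $\Phi(-\infty) = 0$ and $\Phi(+\infty) = 1$. Since $\Phi$ is smooth and the arguments are affine in $(\rho, \xi)$, the map is smooth on the open half-plane $\{\rho > 0\}$, which is all that is needed in the sequel.

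For part~(2), pick a sequence $(x_n)$ with $\nu(A - x_n) \geq p$ and $|x_n| \to \tilde I_A(p) < \infty$. Since $\{x_n\}$ is bounded, extract a convergent subsequence $x_n \to x^*$ with $|x^*| = \tilde I_A(p)$, and the continuity of $\xi \mapsto \nu(A - \xi)$ from~(1) yields $\nu(A - x^*) \geq p$.

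For part~(3), first show $\tilde J_A(p) < 1$. Since $A$ is non-empty it contains an interval of positive length $(\alpha, \beta)$; centering at $x = (\alpha + \beta)/2$ (with the obvious modification when an endpoint is infinite), the set $(A - x)/\sqrt{1 - r}$ contains an interval about the origin whose length diverges as $r \uparrow 1$. Hence $\sup_x \nu((A - x)/\sqrt{1 - r}) \to 1$ as $r \uparrow 1$, which exceeds $p$ and forces $\tilde J_A(p) < 1$. For attainment at $r^* = \tilde J_A(p)$, pick $r_n \downarrow r^*$ and $x_n$ with $\nu((A - x_n)/\sqrt{1 - r_n}) \geq p - 1/n$. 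When $A$ is bounded, $\nu((A - x)/\sqrt{1 - r}) \to 0$ as $|x| \to \infty$ uniformly in $r$ on compact subsets of $[0, 1)$, so $\{x_n\}$ is bounded; extract $x_n \to x^*$ and apply~(1). When $A$ is unbounded it contains a ray, so $\sup_x \nu(A - x) = 1$, giving $\tilde J_A(p) = 0$, and an $x^*$ with $\nu(A - x^*) \geq p$ is found directly from $\Phi$.

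For part~(4), assume $p > \nu(A)$. Taking $x = 0$ shows $\tilde I_A(p) > 0$, so when $\tilde I_A(p) < \infty$ we are in the first alternative. Otherwise $\tilde I_A(p) = \infty$, in which case no finite $x$ satisfies $\nu(A - x) \geq p$; by the unbounded-$A$ argument of~(3), $A$ must be bounded, whence $\sup_x \nu(A - x) < p$ strictly (the sup being attained via Bolzano--Weierstrass as in~(2)). Were $\tilde J_A(p) = 0$, the attainment argument of~(3) applied at $r = 0$ would produce $x^*$ with $\nu(A - x^*) \geq p$, a contradiction; so $\tilde J_A(p) > 0$, and combined with $\tilde J_A(p) < 1$ from~(3) we conclude $\tilde J_A(p) \in (0, 1)$. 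The main technical point throughout is controlling boundedness of the optimizing sequences in $x$, which forces us to split into bounded and unbounded cases for $A$.
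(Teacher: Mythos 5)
Your proof is correct and follows essentially the same route as the paper's: establish smoothness of $(\rho,\xi)\mapsto\nu(\rho A+\xi)$, then obtain attainment of $\tilde I_A(p)$ and $\tilde J_A(p)$ by continuity-and-compactness arguments, splitting according to whether $A$ is bounded or contains a ray, and deriving part~(4) as a corollary of parts~(2) and~(3). The only distinction worth flagging is your observation that the smoothness in part~(1) really holds on the open half-plane $\{\rho>0\}$ rather than all of $\bbR^2$ (the map behaves like $|\rho|$ near $\rho=0$); the paper's integral formula $\int_A\tfrac{1}{\sqrt{2\pi}}e^{-(\rho t+\xi)^2/2}\rho\,{\rm d}t$ quietly assumes $\rho\ge 0$ as well, and as you note this restriction is harmless since only $\rho$ near $1$ is used downstream.
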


\begin{proof}
Part~\ref{prop:supx0} follows from the dominated convergence theorem and standard arguments once we write 
\begin{equation}
  \nu(\rho A + \xi) = \int_A \frac{1}{\sqrt{2\pi}} e^{-\frac{(\rho t+\xi)^2}{2}} \rho \dd t
\end{equation}
since the integrand is in ${\rm C}^{\infty}(\bbR^2)$.

For part~\ref{prop:supx1} and~\ref{prop:supx2}, if $A=\bbR$, then $\tilde{I}_A(p) = \tilde{J}_A(p) = 0$, and there is nothing to prove. Otherwise, define
\begin{equation}
  \varphi_A(r,x) = \nu((A-x)/\sqrt{1-r})
\end{equation}
which is in $\rmC^\infty([0,1) \times \bbR)$ by part~\ref{prop:supx0}.
Therefore $\{x \in \bbR :\: \varphi_A(0, x) \geq p\}$ is a closed set, which, if non-empty, must contain a minimizer of $|\cdot|$. This shows part~\ref{prop:supx1}.

For part~\ref{prop:supx2}, if $A$ contains a half-infinite interval, then since $\varphi_A(0, x) \to 1 > p$ if $x \to +\infty$ or $x \to -\infty$, we must have $\tilde{I}_A(p) < \infty$. Therefore $\tilde{J}_A(p) = 0$, and \eqref{e:3} is satisfied 
with $r=0$ and $x$ from part~\ref{prop:supx1}. Otherwise, $A$ is a finite union of finite intervals, and so there must exist $R < 1$, $M < \infty$ such that
\begin{itemize}
 \item $\varphi_A(r,x) \geq p$ for some $0 \leq r \leq R$ and $x$ with $|x| \leq M$.
 \item $\varphi_A(r,x) < p/2$ for all $0 \leq r \leq R$ and $x$ with $|x| > M$.
\end{itemize}
Thus, $\tilde{J}_A(p)$ is the infimum of the continuous function $r$ over the 
non-empty compact set 
\begin{equation}
  \{(r,x) : \varphi_A(r,x) \geq p, \, 0 \leq r \leq R,\, |x| \leq M \} \,,
\end{equation}
which gives part~\ref{prop:supx2}.

Finally, if $p > \nu(A)$, then $\tilde{I}_A(p) > 0$ by part~\ref{prop:supx1}. At the same time, if $\tilde{J}_A(p)= 0$, then $\tilde{I}_A(p) < \infty$ by part~\ref{prop:supx2}. This takes care of part~\ref{prop:supx3}.
\end{proof}

Below is a standard result concerning the uniformity of the convergence to the Normal distribution under the CLT.
\begin{prop}
\label{prop:CLTUniformity}
Let $A \subseteq \bbR$ be a continuity set of $\nu(A)$, i.e. $\nu(\partial A) = 0$ and 
$R > 0$. Then,
\begin{equation}
  \lim_{n \to \infty} \sup_{\rho \in [R^{-1}, R]} \sup_{\xi \in \bbR}
    |\nu_n(\sqrt{n}(\rho A + \xi)) - \nu(\rho A + \xi)| = 0 \,.
\end{equation}
\end{prop}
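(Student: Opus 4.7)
My plan is to handle finite unions of intervals via the Berry--Esseen theorem and pass to general continuity sets by a sandwiching argument.

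Since the simple random walk has i.i.d.\ $\pm 1$ increments (hence zero mean, unit variance, and bounded third moment), the classical Berry--Esseen inequality provides
\[
\sup_{x \in \bbR} \big|\nu_n((-\infty, \sqrt{n}\, x]) - \nu((-\infty, x])\big| \leq C/\sqrt{n}
\]
for an absolute constant $C$. Taking differences at two endpoints and summing over the at most $k$ disjoint intervals comprising $\rho A + \xi$ (for $A \in \cA$ a union of $k$ intervals and $\rho > 0$), this immediately yields
\[
\sup_{\rho \in [R^{-1}, R],\, \xi \in \bbR} |\nu_n(\sqrt{n}(\rho A + \xi)) - \nu(\rho A + \xi)| \leq 2 k C/\sqrt{n} \to 0,
\]
settling the proposition for $A \in \cA$ at the much stronger rate $O(n^{-1/2})$.

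For an arbitrary continuity set $A$ I would fix $\eps > 0$ and find finite unions of intervals $A^- \subseteq A \subseteq A^+$ satisfying
\[
\sup_{(\rho, \xi) \in [R^{-1}, R] \times \bbR} \nu(\rho(A^+ \setminus A^-) + \xi) < \eps.
\]
Given such a sandwich, monotonicity combined with the previous step applied to $A^\pm$ gives $|\nu_n(\sqrt{n}(\rho A + \xi)) - \nu(\rho A + \xi)| \leq \eps + O(1/\sqrt{n})$ uniformly in $(\rho, \xi)$, and letting $n \to \infty$ followed by $\eps \to 0$ concludes the proof. To construct the sandwich, I would exploit the uniform density bound $\rho \phi(\rho t + \xi) \leq R/\sqrt{2\pi}$ (with $\phi$ the standard Gaussian density), which gives $\nu(\rho B + \xi) \leq \lambda(B)\, R/\sqrt{2\pi}$; this reduces matters to Lebesgue approximation of $A$ on a large window $[-L, L]$, which is available because $\nu(\partial A) = 0$ together with the strict positivity of $\phi$ imply $\lambda(\partial A) = 0$.

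The hardest step is the tail of the sandwich: since $\xi$ is unconstrained, the portion of $A$ outside any fixed window $[-L, L]$ may be translated into the bulk of the Gaussian $\nu(\rho \cdot + \xi)$ by an appropriate choice of $|\xi|$, so it cannot be dispensed with by a fixed tail estimate on $\nu$. This forces one to choose $A^\pm$ so that $\lambda(A^+ \setminus A^-)$ is itself finite and small, which is straightforward for the sets used later in the paper (all in $\cA$, covered by the first step) but requires more care in full generality.
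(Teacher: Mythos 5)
Your argument takes a genuinely different route from the paper's. You prove the claim for $A \in \cA$ directly from the Berry--Esseen theorem, obtaining the stronger quantitative rate $O(n^{-1/2})$, and then propose to reach general continuity sets by a sandwiching argument. The paper instead invokes a theorem on $\nu$-uniformity classes (Theorem 2 of the cited reference), which reduces the claim to checking that $\sup_{\rho,\xi}\nu\bigl((\partial(\rho A+\xi))^\delta\bigr)\to 0$ as $\delta\to 0$; it then dominates $\nu$ by a multiple of Lebesgue measure $\lambda$ and uses the identity $(\partial(\rho A+\xi))^\delta = \rho(\partial A)^{\delta/\rho}+\xi$ to reduce everything to $\lambda\bigl((\partial A)^{R\delta}\bigr)\to 0$. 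For $A\in\cA$ --- the only case to which the proposition is actually applied later --- both arguments are complete, and yours buys an explicit rate. The obstruction you identify for general continuity sets, namely that the unconstrained shift $\xi$ can translate any distant piece of $A$ into the Gaussian bulk so that the sandwich must be \emph{globally} tight in Lebesgue measure, is genuine. It is worth noting that the paper's own proof runs into essentially the same obstruction: if $\partial A$ is unbounded then $\lambda\bigl((\partial A)^{R\delta}\bigr)=\infty$ for every $\delta>0$, so the final limit in the paper's argument does not follow from $\lambda(\partial A)=0$ alone. Neither proof handles an arbitrary continuity set as written, but both are correct for the sets that matter here, and your proposal is honest about exactly where the general case would require more.
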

\begin{proof}
By Theorem 2 in~\cite{BillFlem}, it is enough to check that 
\begin{equation}
\label{eqn:20.2}
  \lim_{\delta \to 0} \sup_{\xi, \rho} \nu\big( (\partial(\rho A + \xi))^\delta \big) = 0,
\end{equation}
where for a set $D \subset \bbR$, we set $D^{\delta} := \{x \in \bbR :\: \inf_{y \in D} |x-y| < \delta\}$ and the supremum is over $\rho$ and $\xi$ as in the statement in the proposition. Since $\nu$ is equivalent to $\lambda$, Lebesgue measure on $\bbR$, 
we may show~\eqref{eqn:20.2} with $\lambda$ in place of $\nu$. But,
\begin{equation}
  \lambda \big((\partial(\rho A + \xi))^\delta \big) = 
  \lambda \big(\rho(\partial A)^{\delta/\rho} + \xi \big) 
  \leq R \lambda \big( (\partial A)^{R \delta} \big) \,,
\end{equation}
The last term goes to $0$ as $\delta \to 0$, since $\lambda(\partial A) = 0$.
\end{proof}

We shall need the following uniform Chernoff-Cram\'{e}r-type upper bound. 
\begin{lem}
\label{lem:UnifCramer}
Let $\bbX$ be a family of random variables on $\bbR$ with zero mean such that for some
$\theta_0 > 0$
\begin{equation}
\label{eqn:1a}
  \sup_{X \in \bbX} \bbE e^{\theta_0 X} < \infty \quad \text{ and } \quad
  \sup_{X \in \bbX} \bbE (X^-)^2 < \infty \, .
\end{equation}
Then there exists $C > 0$ such that for any $\Delta > 0$ small enough,
any $m\geq 1$ and $X_1, \dots, X_m$ independent copies of random variables in $\bbX$
\begin{equation}
\label{eqn:2}
  \bbP \big(\tfrac{1}{m} \sum_{i=1}^m X_i \, > \, \Delta \big) \leq e^{-C \Delta^2 m}
\end{equation}
\end{lem}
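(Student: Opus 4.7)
The approach is the classical Chernoff--Cram\'{e}r argument, with the main work going into establishing a quadratic upper bound on the moment generating function $\bbE e^{\theta X}$ that is uniform over $X \in \bbX$ for small $\theta > 0$. Concretely, I would prove that there exists $C_1 > 0$ (depending only on $\theta_0$ and the two suprema in \eqref{eqn:1a}) such that
\begin{equation*}
  \bbE e^{\theta X} \leq 1 + C_1 \theta^2 \qquad \text{for all } X \in \bbX \text{ and } \theta \in [0, \theta_0/2].
\end{equation*}
Once this uniform MGF bound is in hand, the standard exponential Markov inequality gives
\begin{equation*}
  \bbP\bigl(\tfrac{1}{m} \textstyle\sum_i X_i > \Delta\bigr) \leq e^{-\theta m \Delta} \prod_{i=1}^m \bbE e^{\theta X_i} \leq \exp\bigl(-\theta m \Delta + C_1 \theta^2 m\bigr),
\end{equation*}
and optimizing over $\theta$ by setting $\theta = \Delta/(2 C_1)$ (which lies in $[0,\theta_0/2]$ as soon as $\Delta$ is small enough that $\Delta \leq C_1 \theta_0$) yields the claimed bound with $C = 1/(4 C_1)$.

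To establish the uniform MGF bound, I would start from the deterministic inequality
\begin{equation*}
  e^y \leq 1 + y + \tfrac{y^2}{2} e^{y^+}, \qquad y \in \bbR,
\end{equation*}
which is verified by direct Taylor expansion (for $y \geq 0$ note that $e^y - 1 - y = \sum_{k \geq 2} y^k/k! \leq (y^2/2) e^y$, while for $y \leq 0$ the elementary inequality $e^y \leq 1 + y + y^2/2$ holds). Applying this with $y = \theta X$ and using $\bbE X = 0$,
\begin{equation*}
  \bbE e^{\theta X} \leq 1 + \tfrac{\theta^2}{2}\, \bbE\bigl[X^2 e^{(\theta X)^+}\bigr].
\end{equation*}

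The next step is to bound $\bbE[X^2 e^{(\theta X)^+}]$ uniformly in $X \in \bbX$ and in $\theta \in [0,\theta_0/2]$. I would split the expectation according to the sign of $X$:
\begin{equation*}
  \bbE\bigl[X^2 e^{(\theta X)^+}\bigr] = \bbE\bigl[(X^-)^2\bigr] + \bbE\bigl[X^2 e^{\theta X} \I\{X \geq 0\}\bigr].
\end{equation*}
The first summand is uniformly bounded by hypothesis. For the second, use the elementary fact that $u^2 \leq C(\theta_0) e^{\theta_0 u/2}$ for all $u \geq 0$, and then, for $\theta \leq \theta_0/2$,
\begin{equation*}
  X^2 e^{\theta X} \I\{X \geq 0\} \leq C(\theta_0)\, e^{\theta_0 X/2} e^{\theta_0 X/2} \I\{X \geq 0\} \leq C(\theta_0)\, e^{\theta_0 X},
\end{equation*}
so that this summand is bounded by $C(\theta_0) \sup_{X \in \bbX} \bbE e^{\theta_0 X}$, which is finite by the first assumption of \eqref{eqn:1a}. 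Combining the two bounds gives the required uniform quadratic estimate on $\bbE e^{\theta X}$.

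I do not anticipate a real obstacle here; the argument is essentially textbook. The only subtlety is making sure the two moment hypotheses in \eqref{eqn:1a} are applied on the correct half-line---the uniform second moment is needed only for $X^-$ because the negative tail cannot be dominated by the MGF bound, while the uniform MGF hypothesis handles the positive tail through the inequality $u^2 \leq C(\theta_0) e^{\theta_0 u/2}$. Once these pieces are in place the Chernoff optimization is routine.
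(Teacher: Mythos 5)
Your proof is correct and reaches the same bound with the same Chernoff optimization $\theta = \Delta/(2C_1)$ and the same split of the crucial second-moment expectation into a negative half (controlled by $\sup \bbE(X^-)^2$) and a positive half (absorbed into $\sup \bbE e^{\theta_0 X}$). The one genuine difference is in how the uniform quadratic bound is produced. The paper works with the log-moment generating function $L_X(\theta)$, expands it by Taylor's theorem with Lagrange remainder $L_X(\theta)=\tfrac12 L''_X(\wh\theta)\theta^2$, and then bounds $L''_X(\wh\theta)\le M''_X(\wh\theta)$ via Jensen's $M_X\ge 1$; your version instead bounds $M_X(\theta)=\bbE e^{\theta X}$ directly using the pointwise inequality $e^y \le 1 + y + \tfrac{y^2}{2}e^{y^+}$ and then passes to the exponent via $(1+C_1\theta^2)^m \le e^{C_1\theta^2 m}$. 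Your route is slightly more elementary and self-contained (no differentiation under the expectation, no mean-value form of Taylor's theorem, no ratio manipulation for $L''_X$), at the cost of first establishing and then invoking a small deterministic inequality; the paper's route is marginally shorter once one is comfortable with the smoothness of $L_X$ on $[0,\theta_0)$. Either way the hypotheses in \eqref{eqn:1a} are used in exactly the same places, and both give $C$ depending only on $\theta_0$ and the two suprema, as required.
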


\begin{proof}
Using the exponential Chebyshef's inequality we may bound the l.h.s. in \eqref{eqn:2}
for any $0 < \theta \leq \theta_1 < \theta_0$ by
\begin{equation}
\label{eqn:3}
  \exp \big\{-m \big(\Delta \theta - m^{-1} \sum_{i=1}^m L_{X_i}(\theta) \big) \big\} \, ,
\end{equation}
where we use $L_X(\theta) = \log \bbE e^{\theta X}$ for the log moment generating 
function of $X$. Since $L_X(\theta)$ is in $\rmC^{\infty}([0,\theta_0))$ due to \eqref{eqn:1a}, we may use Taylor expansion to write (note that the first two terms are $0$)
\begin{equation}
  L_{X}(\theta) = \tfrac12 L^{\prime\prime}_{X}(\wh{\theta}) \theta^2 \, ,
\end{equation}
for some $\wh{\theta} \in (0, \theta)$. Now if we denote by $M_X(\wh{\theta}) = \bbE e^{\wh{\theta} X}$ the moment generating function of $X$ then 
\begin{equation}
  L^{\prime\prime}_{X}(\wh{\theta}) = 
    \frac{M^{\prime\prime}_{X}(\wh{\theta})M_{X}(\wh{\theta}) - (M^\prime_{X}(\wh{\theta}))^2}{M^2_{X}(\wh{\theta})}
    < C M_{X}(\theta_0) + \bbE (X^-)^2\, .
\end{equation}
This follows since $M_{X}(\wh{\theta}) \geq 1$ via Jensen's inequality and since
\begin{equation}
  M^{\prime\prime}_{X}(\wh{\theta}) = \bbE X^2 e^{\wh{\theta} X} 
    \leq \bbE X^2 1_{X < 0} + C \bbE e^{\theta_0 X} 1_{X \geq 0} \,,
\end{equation}
for some $C>0$ independent of $X \in \bbX$. Therefore \eqref{eqn:1a} implies that there exists $K > 0$ for which
\begin{equation}
  \sup_{X \in \bbX} \sup_{\wh{\theta} \in (0, \theta_1)} L^{\prime\prime}_{X}(\wh{\theta}) < K \,
\end{equation}
and thus
\begin{equation}
  \Delta \theta - m^{-1} \sum_{i=1}^m L_{X_i}(\theta) \geq \Delta\theta - \tfrac12 K\theta^2 \,.
\end{equation}
Using this bound with $\theta = \Delta/K$ in \eqref{eqn:3} and assuming $\Delta$ is small enough, the result follows with $C=(2K)^{-1}$ in \eqref{eqn:2}.
\end{proof}

The last lemma can be used to prove the following.
\begin{lem}
\label{lem:ConcenNormalPop}
There exists $C, C^\prime >0$ such that for all $\Delta > 0$ sufficiently small, 
$A \subset \bbR$, $\zeta \in \cZ$ and $n \geq 1$,
\begin{equation}
\label{eqn:9}
  \bbP \left(\ol{Z}^{\zeta}_n(A) > \tfrac{1}{|\zeta|} 
    \sum_{x \in \zeta} \nu_n(A - x) + \Delta \right) \leq C e^{-C^\prime \Delta^2 |\zeta|} \,.
\end{equation}
The same holds if we replace $>$ with $<$ and $+\Delta$ with $-\Delta$.
\end{lem}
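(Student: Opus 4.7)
The plan is to reduce the event in \eqref{eqn:9} to a concentration inequality for a sum of $|\zeta|$ independent mean-zero random variables and then apply Lemma~\ref{lem:UnifCramer}. Let $\bar\mu := |\zeta|^{-1} \sum_{x \in \zeta} \nu_n(A-x)$, and observe that $Z_n^\zeta = \sum_{x \in \zeta} Z_n^x$ with the summands independent by the branching property and with $|Z_n^\zeta| \geq |\zeta| b^n > 0$ thanks to $\bbP(|Z_1|\geq 2) = 1$. Hence the event in \eqref{eqn:9} is equivalent to
\begin{equation*}
  \sum_{x \in \zeta} V_x > 0, \qquad V_x := Z_n^x(A) - (\bar\mu+\Delta)|Z_n^x|.
\end{equation*}
Since $\bbE V_x = \beta^n(\nu_n(A-x) - \bar\mu - \Delta)$, we have $\sum_x \bbE V_x = -|\zeta|\beta^n\Delta$, so setting $U_x := \beta^{-n}(V_x - \bbE V_x)$ we obtain independent mean-zero $U_x$'s, and the event becomes $|\zeta|^{-1} \sum_{x \in \zeta} U_x > \Delta$.

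Next I would verify that the family $\bbX := \{U_x : n \geq 1, A \subset \bbR, x \in \bbR, \bar\mu \in [0,1]\}$ satisfies the hypotheses of Lemma~\ref{lem:UnifCramer}. Since $0 \leq Z_n^x(A) \leq |Z_n^x|$, $0 \leq \bar\mu + \Delta \leq 2$ (without loss of generality, otherwise the event is empty), and $|\nu_n(A-x) - \bar\mu - \Delta| \leq 2$, one obtains the pathwise bound $|U_x| \leq C W_n^x + C'$, where $W_n^x := |Z_n^x|/\beta^n$ is the standard Galton-Watson martingale. It therefore suffices to show that $W_n = W_n^0$ satisfies $\sup_n \bbE e^{\theta_0 W_n} < \infty$ for some $\theta_0 > 0$ and $\sup_n \bbE W_n^2 < \infty$. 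The second is the classical Kesten-Stigum $L^2$ bound, valid under $\bbE |Z_1|^2 < \infty$. The first is a theorem of Athreya, obtained by iterating the identity $\bbE e^{\theta W_n} = \bbE\bigl[\phi_{n-1}(\theta/\beta)^{|Z_1|}\bigr]$ with $\phi_k(\theta) := \bbE e^{\theta W_k}$ and checking that the iteration remains bounded provided $\theta$ is small; both facts rely only on the standing hypothesis $\bbE e^{s|Z_1|} < \infty$ near $s = 0$.

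Granted these uniform moment estimates on $\bbX$, Lemma~\ref{lem:UnifCramer} immediately yields $\bbP\bigl(|\zeta|^{-1}\sum_{x \in \zeta} U_x > \Delta\bigr) \leq e^{-C'\Delta^2|\zeta|}$ for all $\Delta$ sufficiently small, which is the desired bound. The symmetric inequality is proved identically: replace $\bar\mu + \Delta$ by $\bar\mu - \Delta$ (with the event trivially empty on $\{\bar\mu < \Delta\}$) and apply the lemma to $\{-U_x\}$, which still belongs to a family satisfying \eqref{eqn:1a}. I expect the only technical point that requires genuine care is the uniform exponential moment for $W_n$; once that is in hand, the rest is a clean reduction and bookkeeping.
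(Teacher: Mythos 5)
Your reduction is correct and, while it rests on the same two pillars as the paper's proof---the independence of $(\wh Z_n^x)_{x\in\zeta}$ and the uniform Chernoff--Cram\'er estimate of Lemma~\ref{lem:UnifCramer} fed by Athreya's uniform exponential-moment bound for $|\wh Z_n|$---the route through the quotient is genuinely different. The paper decomposes the event for the \emph{ratio} $\ol Z_n^\zeta(A)$ by a union bound: either the denominator $\tfrac1{|\zeta|}\sum_x|\wh Z_n^x|$ dips below $1-\Delta/2$, or the numerator $\tfrac1{|\zeta|}\sum_x\wh Z_n^x(A)$ exceeds its mean by $\Delta/3$, and applies Lemma~\ref{lem:UnifCramer} to each piece separately (which is also where the extraneous factor $C$ and the ``$\Delta$ small enough'' bookkeeping come from). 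You instead clear the denominator---exactly because $|Z_n^\zeta|>0$ a.s.\ under $\bbP(|Z_1|\ge 2)=1$---and obtain a single sum $\sum_x U_x$ of centered, independent variables to which Lemma~\ref{lem:UnifCramer} applies once. This is cleaner and even saves the multiplicative constant in the conclusion. The one place you should be a bit more careful is the indexing of your family $\bbX$: the variables $U_x$ depend on the coefficient $c:=\bar\mu+\Delta$, not on $\bar\mu$ alone, so the family should be parametrized by $c\in[0,1]$ (one may assume $c\le 1$ since otherwise the event is empty); with that amendment your bound $|U_x|\le W_n^x+1$ gives the uniform moment hypotheses of Lemma~\ref{lem:UnifCramer} across the whole family, and the argument goes through. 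The treatment of the reversed inequality by applying the same reduction to $\{-U_x\}$ is also correct, and in fact more direct than the paper's device of passing to $A^\rmc$.
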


\begin{proof}
Starting with the first inequality and using 
\begin{equation}
  \ol{Z}_n^\zeta(A) = 
  \frac{\tfrac{1}{|\zeta|} \sum_{x \in \zeta} \wh{Z}_n^x(A)}
       {\tfrac{1}{|\zeta|} \sum_{x \in \zeta} |\wh{Z}_n^x| } \, ,
\end{equation}
the l.h.s. of \eqref{eqn:9} is bounded above by
\begin{equation}
\label{eqn:12}
    \bbP \big(\tfrac{1}{|\zeta|} \sum_{x \in \zeta} \wh{Z}_n^x(\bbR)  < 
      1-\tfrac{\Delta}{2} \big) \, + \, 
     \bbP \big(\tfrac{1}{|\zeta|} \sum_{x \in \zeta} \wh{Z}_n^x(A)  > 
       \tfrac{1}{|\zeta|} \sum_{x \in \zeta} \nu_n(A - x) + \tfrac{\Delta}{3} \big)
\end{equation}
as long as $\Delta$ is small enough. Now Theorem~4 in \cite{ATH1} gives a uniform bound
on the moment generating function $e^{\theta \wh{Z}_n(\bbR)}$ for all $n \geq 1$ and $\theta \in [0,\theta_0]$, for some $\theta_0 > 0$. This uniform bound can be extended to include
also the moment generating functions of (the stochastically smaller) $\wh{Z}_n^x(A)$ for all $A \subseteq \bbR$ and $x \in \bbR$ in the same range of $\theta$. The non-negativity of all these random variables imply that we may extend the bound also to all $\theta < 0$. Thus, it is not difficult to see that the family of random variables 
\begin{equation}
  \bbX = \{\pm(\wh{Z}_n(A) - \nu_n(A)) : \: n \geq 1,\, A \subseteq \bbR\}
\end{equation}
satisfies the conditions in Lemma~\ref{lem:UnifCramer}, whence \eqref{eqn:12}
is bounded above by $C e^{-C^\prime \Delta^2 |\zeta|}$ for some $C, C^\prime > 0$ as desired. 

Replacing $A$ with $A^\rmc$ , we obtain \eqref{eqn:9} 
with $<$, $-\Delta$ in place of $>$, $+\Delta$. 
\end{proof}

We shall need the following uniform lower bound on the probability of a typical deviation of $\ol{Z}_n$ from the Gaussian distribution. 
\begin{lem}
\label{lem:TypicalDev}
For all $A \in \cA$, $t > 0$ there exists $\epsilon = \epsilon(t,A) > 0$ such that
\begin{equation}
\label{eqn:144}
  \liminf_{n \to \infty} \bbP(\ol{Z}_n(\sqrt{n}A) > \nu(A) + t/\sqrt{n}) > \epsilon \, .
\end{equation}
Moreover, we may choose the $\epsilon$'s such that for fixed $A \in \cA$ and $t>0$, 
\begin{equation}
\label{eqn:145}
  \inf_{A^\prime} \epsilon(t, A^\prime) > 0 \,,
\end{equation}
and the above limit with $A^\prime$ in place of $A$ is uniform in $A^\prime$, where
$A^\prime = \rho A + \xi$ for $(\rho, \xi)$ in any compact subset of $(0,\infty) \times (-\infty, 
+\infty)$. The same result holds with $<$ in place of $>$ and $-t/\sqrt{n}$ in place 
of $+t/\sqrt{n}$.
\end{lem}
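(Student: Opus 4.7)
The plan is to derive a CLT of the form
$$\sqrt{n}\bigl(\ol{Z}_n(\sqrt{n}A^\prime)-\nu(A^\prime)\bigr)\Rightarrow Y_{A^\prime},$$
with $Y_{A^\prime}$ a non-degenerate random variable whose law varies continuously in $A^\prime=\rho A+\xi$ for $(\rho,\xi)$ in compact subsets of $(0,\infty)\times\bbR$. Given this, \eqref{eqn:144}--\eqref{eqn:145} follow from the Portmanteau theorem applied to the open half-line $(t,\infty)$: provided $Y_{A^\prime}$ admits a density (so in particular has no atom at $t$), the probability $\bbP(Y_{A^\prime}>t)$ is a strictly positive continuous function of $(\rho,\xi)$ and is therefore bounded below on compacts. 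The ``$<$'' version of the statement is obtained by applying the same argument to $(A^\prime)^{\rm c}$ in place of $A^\prime$.

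The most direct route to the CLT is to invoke the sharper Chen-type expansion already cited in Section~\ref{sub:OldResults}, which at generation $n$ gives $\sqrt{n}(\wh{Z}_n(\sqrt{n}A^\prime)-\nu(A^\prime)|\wh{Z}|)\Rightarrow W_{A^\prime}$ for a non-degenerate $W_{A^\prime}$, and then to divide through by $|\wh{Z}_n|\to|\wh{Z}|>0$ (Kesten-Stigum \eqref{e:1003}). This yields $Y_{A^\prime}=W_{A^\prime}/|\wh{Z}|$; the additional error $\sqrt{n}\,\nu(A^\prime)(|\wh{Z}|-|\wh{Z}_n|)/|\wh{Z}_n|$ introduced by the swap from $|\wh{Z}|$ to $|\wh{Z}_n|$ in the denominator is negligible, since Heyde's CLT makes $|\wh{Z}|-|\wh{Z}_n|$ of order $\beta^{-n/2}$, so after multiplication by $\sqrt{n}$ it vanishes. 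The continuous dependence of $W_{A^\prime}$'s law on $A^\prime$ follows from Proposition~\ref{prop:supx0}, since the coefficients appearing in Chen's expansion are smooth functionals of $A^\prime$ (essentially integrals of the Gaussian density across its boundary).

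A more self-contained alternative is to fix $\alpha\in(0,1)$, set $m=\lfloor\alpha n\rfloor$, and condition on $Z_m$. Lemma~\ref{lem:ConcenNormalPop} with $\Delta=n^{-1/2+\eta}$ and $|Z_m|\geq b^m$ exponentially large gives
$$\ol{Z}_n(\sqrt{n}A^\prime)=\frac{1}{|Z_m|}\sum_{x\in Z_m}\nu_{n-m}\bigl(\sqrt{n}A^\prime-x\bigr)+o(n^{-1/2})$$
with probability tending to one, and Proposition~\ref{prop:CLTUniformity} then replaces $\nu_{n-m}(\sqrt{n}A^\prime-x)$ by $g(x/\sqrt{n}):=\nu((A^\prime-x/\sqrt{n})/\sqrt{1-\alpha})$ uniformly in the relevant range. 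The problem thus reduces to a functional CLT at scale $1/\sqrt{n}$ for the empirical average $|Z_m|^{-1}\sum_{x\in Z_m}g(x/\sqrt{n})$, whose mean equals $\nu(A^\prime)$ in the limit by a Gaussian convolution computation. The principal obstacle in this route is establishing this functional BRW CLT together with uniform control of the limit variance as $A^\prime$ varies; this obstacle is precisely what the Chen route bypasses.
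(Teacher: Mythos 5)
There is a genuine gap in Route 1, and it is precisely the step that the paper's proof spends the most effort on. You reduce matters to the claim that, for a weak limit $Y_{A^\prime}$ of $\sqrt{n}\bigl(\ol{Z}_n(\sqrt{n}A^\prime)-\nu(A^\prime)\bigr)$, one has $\bbP(Y_{A^\prime}>t)>0$, and you argue that this holds ``provided $Y_{A^\prime}$ admits a density.'' But a density gives continuity of $y\mapsto\bbP(Y_{A^\prime}>y)$, not strict positivity at a given $t>0$: a random variable uniformly distributed on $[-1,0]$ has a density yet $\bbP(\,\cdot>t)=0$ for every $t>0$. What is actually needed is that the support of $Y_{A^\prime}$ (equivalently of $W_{A^\prime}$) extends past $t$ for every $t>0$, i.e.\ that the limit is unbounded from above. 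This is not a soft consequence of smoothness; in the paper it is the last and decisive step. Chen's expansion identifies the limit as (essentially) $-C_0-C_1\ol{M}$ with $\ol{M}=\wh{M}/|\wh{Z}|$, and the paper then proves $\ol{M}$ is unbounded from below via the explicit BRW event $\{\ol{Z}_r=\delta_{-r}\}$, whose probability is positive, together with the symmetry of $\ol{M}_{n-r}$. Your proposal never establishes this unboundedness, so it does not actually yield $\epsilon>0$ in \eqref{eqn:144}. (Neither do you justify that $Y_{A^\prime}$ has a density; but even granting that, the positivity would not follow.)

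A secondary issue: Chen's result, as quoted in Section~\ref{sub:OldResults}, is of the form $\sqrt{n}\bigl(\wh{Z}_n(\sqrt{n}A)-|\wh{Z}|\nu(A)\bigr)=\varphi_1(n)|\wh{Z}|+\alpha_1\wh{M}+o(1)$ almost surely, with $\varphi_1$ bounded but \emph{not} assumed to converge. So the quantity you want to pass to the weak limit does not, in general, converge in distribution, and the Portmanteau step is not available as stated. The paper circumvents this by working with a one-sided $\liminf$ bound, $\liminf_n\sqrt{n}(\cdot)\ge -C_0|\wh{Z}|-C_1\wh{M}$, which is exactly what \eqref{eqn:144} calls for. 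If you want to keep a Portmanteau-style argument you would need to restrict to a subsequence along which $\varphi_1$ converges, but \eqref{eqn:144} is a $\liminf$ over all $n$, so you would then have to argue separately why the minimizing subsequence suffices --- which again amounts to replacing $\varphi_1(n)$ by $\liminf_n\varphi_1(n)$, i.e.\ the paper's approach. Your Route 2 you correctly flag as blocked by the missing functional CLT, so it does not rescue the argument.
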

\begin{proof}
Consider $A^\prime = \rho A + \xi$ for some $(\rho, \xi) \in (0,\infty) \times (-\infty, +\infty)$.
We may write $\sqrt{n}(\ol{Z}_n(\sqrt{n}A^\prime) - \nu(A^\prime))$ as (recall the definition
of $|\wh{Z}|$ in \eqref{e:1003}),
\begin{equation}  
  \frac{\sqrt{n}(\wh{Z}_n(\sqrt{n}A^\prime)-|\wh{Z}|\nu(A^\prime))}{|\wh{Z}_n|} - \frac{\nu(A^\prime)}{|\wh{Z}_n|}
    \sqrt{n}(|\wh{Z}_n|-|\wh{Z}|)
\end{equation}
Now Theorem 4.2 in~\cite{revesz} states that
\begin{equation}
  \bbE (|\wh{Z}_n| - |\wh{Z}|)^2 = O(\beta^{-n})\, ,
\end{equation}	
from which it follows by Borel-Cantelli that $\sqrt{n}(|\wh{Z}_n| - |\wh{Z}|) \to 0$, a.s.
At the same time Corollary 2.3 in \cite{chen2001exact} (notice that the typo $O(1)$ instead of $o(1)$ there) implies that for some positive $C_0, C_1$,
\begin{equation}
\label{e:33b}
  \liminf_{n \to \infty} \sqrt{n}(\wh{Z}_n(\sqrt{n}A^\prime) - |\wh{Z}|\nu(A^\prime)) \geq
    -C_0 |\wh{Z}| - C_1 \wh{M} \quad \bbP\text{-a.s.,}
\end{equation}
where $\wh{M} = \lim_{n \to \infty} \wh{M}_n$ and $\wh{M}_n = \int x \dd \wh{Z}_n$. 
The sets considered in the corollary are of the form $(-\infty, y]$, but it is clear that by summation one can extend it to all sets in $\cA$. 
Furthermore, it is immediate from the statement of the corollary that the constants $C_0, C_1$ 
can be chosen independently of $A^\prime = \rho A + \xi$ as long as $(\rho, \xi)$ are chosen from a compact subset of $(0, \infty) \times (-\infty, +\infty)$. Less immediate, but still true, is that the proofs of the corollary and Theorem 2.2 on which it is based in fact give that the above limit is uniform in all such $A^\prime$. Combining all the above and writing $\ol{M}$ for $\wh{M}/|\wh{Z}|$ we have,
\begin{equation}
  \liminf_{n \to \infty} \inf_{A^\prime} \sqrt{n}(\ol{Z}_n(\sqrt{n}A^\prime) - \nu(A^\prime)) \geq -C_0 - C_1 \ol{M} \quad \bbP\text{-a.s.}
\end{equation}
and it remains to show that $\ol{M}$ is unbounded from below. 

To this end, note that
$\ol{M} = \lim_{n \to \infty} \ol{M}_n$ where $\ol{M}_n = \wh{M}_n/|\wh{Z}_n|$,
and that for any integers $r < n$, symmetry of $\ol{M}_{n-r}$ around zero entails
\begin{equation}
  \bbP(\ol{M}_n \leq -r) \geq \bbP(\ol{Z}_r = \delta_{-r}) \tfrac{1}{2} > C
\end{equation}
where $C=C(r) > 0$ does not depend on $n$. Therefore $\bbP(\ol{M} \leq -r) > 0$, and 
since $r$ is arbitrary, $\ol{M}$ is indeed unbounded. This shows \eqref{eqn:144} and \eqref{eqn:145}.

Finally, applying the above results to $A^\rmc$ in place of $A$, we obtain the same lower bound for 
the probability of a deviation to the opposite side.
\end{proof}

\subsection{Proof of Theorem \ref{thm:GeneralCase}}
\label{sub:ProofOfMainTheorem}
Fix $A$ and $p$ as in the conditions of the theorem. There are two cases to consider, according to whether $I_A(p)$ is finite or not.

\subsubsection{The Case $I_A(p) < \infty$.}
Let $x$ be such that $\nu(A-x) \ge p$ and $|x|= \tilde{I}_A(p) > 0$, as guaranteed by Proposition~\ref{prop:supx}.

\medskip
\noindent {\em Lower bound}. 
Set 
\begin{equation}
  w = \lfloor |x| \sqrt{n} \rfloor \sgn(x) \quad ; \qquad
  m = n-|w| \quad ; \qquad 
  \zeta = b^{|w|} \delta_{w}
\end{equation}
and write
\begin{equation}
\label{e:27}
 \bbP(\ol{Z}_n(\sqrt{n}A) \geq p) \geq 
    \bbP(Z_{|w|} = \zeta) \  \bbP(\ol{Z}^\zeta_{m}(\sqrt n A) \geq p )
\end{equation}
The first factor can be lower bounded by $\exp\{-C b^{|w|}\}$ as the event $\{Z_{|w|} = \zeta\}$
is equivalent to having all particles in the first $|w|$ generations give birth to $b$ children, all of whom take either a $+1$ step or a $-1$ step, depending on the sign of $x$. This requires that at most $C^\prime b^{|w|}$ independent particles make certain branching/walking choices, all of which have a uniformly positive probability.

The second factor in \eqref{e:27} can be bounded below by
\begin{equation}
  \label{e:28}
  \big(\bbP(\ol{Z}_m(\sqrt{n}A-w) \geq p)\big)^{|\zeta|} \, .
\end{equation}
The probability in the above expression is further bounded below by
\begin{equation}
  \bbP \left( \ol{Z}_{m}
    \left( \sqrt{m} \left( \sqrt{\frac{n}{m}}(A-x) + 
	\frac{x\sqrt{n}-w}{\sqrt{m}}
    \right) \right) \geq \nu \big( A-x \big) \right)\, .
\end{equation}
which, for $\rho = \sqrt{\frac{n}{m}}$ and $\xi = \frac{x\sqrt{n}-w}{\sqrt{m}}$, is
is equal to 
\begin{equation}
\label{e:30}
  \bbP \big(\ol{Z}_{m} \big(\sqrt{m}(\rho(A-x) + \xi) \big)
      \geq \nu(\rho(A-x) + \xi) + \nu(A-x) - \nu(\rho(A-x) + \xi)\big) \,.
\end{equation}
Now since $\rho = 1 + O(1/\sqrt{n})$, $\xi = O(1/\sqrt{n})$, part~\ref{prop:supx0} of 
Proposition~\ref{prop:supx} implies that $\nu(A-x) - \nu(\rho(A-x) + \xi) = O(1/\sqrt{n})$, whence
we may find $t > 0$ large enough such that \eqref{e:30} is bounded below by
\begin{equation}
  \bbP \big(\ol{Z}_{m} \big(\sqrt{m}(\rho(A-x) + \xi) \big)
      \geq \nu(\rho(A-x) + \xi) + t/\sqrt{m} \big) 
\end{equation}
This is bounded away from $0$ uniformly in $n$ via Lemma~\ref{lem:TypicalDev}. 

Plugging this back into \eqref{e:28}, recalling that $|\zeta| = b^{|w|}$,
the second factor in \eqref{e:27} is bounded below by
$\exp\{-C^\prime b^{|w|}\}$. Combining the bounds on both factors in \eqref{e:27}
we arrive at
\begin{equation}
 \bbP(\ol{Z}_n(\sqrt{n}A) \geq p) \geq \exp\{-C b^{|x|\sqrt{n}}\} = 
    \exp \big\{-e^{(\log b)\tilde{I}_A(p)\sqrt{n} + C^\prime} \big\}
\end{equation}
as desired.

\medskip
\noindent {\em Upper bound}. Let $\epsilon > 0$ be arbitrarily small and set
\begin{equation}
  |w_\epsilon| = \lfloor (|x|-\epsilon)\sqrt{n}\rfloor \quad ; \qquad m_\epsilon = n-|w_\epsilon| \, . 
\end{equation}
Conditioning on the particles measure $\zeta$ at generation $|w_\epsilon|$, we have
\begin{equation}
\label{e:33}
  \bbP(\ol{Z}_{n}(\sqrt{n}A) \geq p) =
    \sum_{\zeta} \bbP (\ol{Z}^\zeta_{m_\epsilon}(\sqrt{n}A) \geq p) \bbP(Z_{|w_\epsilon|} = \zeta) \, .
\end{equation}
Any such $\zeta$ must satisfy $\supp(\zeta) \subseteq [-|w_\epsilon|, +|w_\epsilon|]$.
Therefore there exists $\delta > 0$, such that for all such $\zeta$ and $z \in \zeta$,
\begin{equation}
  \nu(A-z/\sqrt{n}) \leq \max_{z :\: |z| \leq |x| - \epsilon} \nu(A-z) = p - \delta \, .
\end{equation}
This follows from the choice of $x$ and Proposition~\ref{prop:supx}. 

Using this proposition and also Proposition~\ref{prop:CLTUniformity}, we further obtain for $n$ large,
\begin{equation}
  \frac{1}{|\zeta|} \sum_{z \in \zeta} \nu_{m_\epsilon} (\sqrt{n}A - z) \leq
  \frac{1}{|\zeta|} \sum_{z \in \zeta} \nu \big( \sqrt{\tfrac{n}{m_\epsilon}} A - \tfrac{z}{\sqrt{m_\epsilon}}\big) + \frac{\delta}{2} < p - \frac{\delta}{3} \, .
\end{equation}
Then Lemma~\ref{lem:ConcenNormalPop} implies that $\bbP (\ol{Z}^\zeta_{m_\epsilon}(\sqrt{n}A) \geq p)$ is bounded above by
\begin{equation}
    \bbP \left(\ol{Z}^\zeta_{m_\epsilon}(\sqrt{n}A) \geq
      \frac{1}{|\zeta|} \sum_{z \in \zeta} \nu_{m_\epsilon} (\sqrt{n}A - z) + \frac{\delta}{3} \right) \leq C e^{-C^\prime|\zeta|} \,.
\end{equation}
As $|\zeta| \geq b^{|w_\epsilon|}$ we have from \eqref{e:33} for $n$ large enough,
\begin{equation}
  \bbP(\ol{Z}_{n}(\sqrt{n}A) \geq p) \leq \exp \big\{-e^{(\log b)(\tilde{I}_A(p) - \epsilon)\sqrt{n} - C}
    \big\} \,,
\end{equation}
and this concludes the upper bound as $\epsilon$ was arbitrary.

\subsubsection{The Case $I_A(p) = \infty$.}
The proof in this case is technically similar to the proof in the previous case, although the 
``optimal'' strategy for achieving the desired deviation is different. We start by setting
$r =\tilde{J}_A(p) \in (0,1)$ and choosing $x \in \bbR$ such that 
\begin{equation}
\label{e:38}
  \nu( (A-x)/\sqrt{1-r}) \geq p
\end{equation} 
This is guaranteed by Proposition~\ref{prop:supx}. 

\smallskip
\noindent {\em Lower bound}. Set 
\begin{equation}
q = 2\lfloor rn/2 \rfloor \quad ; \quad w = \lfloor |x| \sqrt{n} \rfloor \sgn(x) \quad ; \quad
s = q + |w| \quad ; \quad \zeta = b^{s} \delta_{w} 
\end{equation}
and write
\begin{equation}
\label{e:38.5}
  \bbP(\ol{Z}_n(\sqrt{n}A) \geq p) \geq \bbP(Z_{s} = \zeta) \bbP(\ol{Z}^\zeta_{n-s}(\sqrt{n}A) \geq p)\,.
\end{equation}
The first factor on the r.h.s. is at least $\exp\{-Cb^{s}\}$ since the event there can be achieved by having all particles give birth to $b$ children in the first $s$ generations, make only $+1$ or $-1$ steps in the first $|w|$ generations (depending on the sign of $x$), and then alternate between $+1$ and $-1$ steps in the succeeding $q$ generations. This requires that $C^\prime b^{s}$ independent particles make certain branching/walking choices, all of which have a uniformly positive probability.

The second factor is bounded below by 
\begin{equation}
\label{e:39}
  \big(\bbP(\ol{Z}_{n-s}(\sqrt{n}A - w) \geq p)\big)^{|\zeta|}
\end{equation}
Setting
\begin{equation}
  m = n-s \quad, \quad \rho = \sqrt{\frac{n}{m}(1-r)} \quad, \quad 
    \xi = \frac{x\sqrt{n} - w}{\sqrt{m}} \,,
\end{equation}
and using \eqref{e:38}, we may bound below the probability in \eqref{e:39} by
\begin{equation}
\begin{split}
  \bbP & \left[ \ol{Z}_m \left(\sqrt{m} \left(\rho\frac{A-x}{\sqrt{1-r}} + \xi\right)\right) 
    \right. \\
`	    & \qquad \left. \geq \nu \left(\left(\rho\frac{A-x}{\sqrt{1-r}} + \xi\right)\right)        	+ \nu \left( \frac{A-x}{\sqrt{1-r}} \right) - \nu \left(\left(\rho\frac{A-x}{\sqrt{1-r}}
	+ \xi\right)\right) \right]
\end{split}
\end{equation}
Now $\rho = 1+O(1/\sqrt{n})$ and $\xi = O(1/\sqrt{n})$ hence by Proposition~\ref{prop:supx}
part~\ref{prop:supx0}, there exists $t > 0$ for which the last probability is bounded below by
\begin{equation}
  \bbP \left[ \ol{Z}_m \left(\sqrt{m} \left(\rho\frac{A-x}{\sqrt{1-r}} + \xi\right)\right) 
    \geq \nu \left(\left(\rho\frac{A-x}{\sqrt{1-r}} + \xi\right)\right) + \frac{t}{\sqrt{m}}\right] \,.
\end{equation}
This is uniformly (in $n$, large enough) positive by virtue of Lemma~\ref{lem:TypicalDev}.
Therefore the second factor in \eqref{e:38.5} is bounded below by $e^{-C|\zeta|} \geq \exp\{-Cb^{s}\}$.

Plugging the two bounds in \eqref{e:38.5} we obtain
\begin{equation}
  \bbP(\ol{Z}_n(\sqrt{n}A) \geq p) \geq \exp \{-e^{(\log b)s + C}\}
    \geq \exp \big\{-e^{(\log b)\tilde{J}_A(p)n(1+o(1))} \big\} \, ,
\end{equation}
as desired.

\medskip
\noindent {\em Upper bound}. As in the previous case, let $\epsilon > 0$ be small enough and set 
\begin{equation}
  q_\epsilon = \lfloor (r-\epsilon)n\rfloor \quad ; \qquad m_\epsilon = n-q_\epsilon \, .
\end{equation}

This time we condition on the particles measure $\zeta$ at generation $q_\epsilon$:
\begin{equation}
\label{e:33a}
  \bbP(\ol{Z}_{n}(\sqrt{n}A) \geq p) =
    \sum_{\zeta} \bbP (\ol{Z}^\zeta_{m_\epsilon}(\sqrt{n}A) \geq p) \bbP(Z_{q_\epsilon} = \zeta) \, .
\end{equation}
Now, from the definition of $r$ it follows that there exists $\delta > 0$ 
such that for all $\epsilon^\prime \in [\epsilon, 2\epsilon]$ and $z \in \bbR$,
\begin{equation}
  \nu \left( \frac{A - z}{\sqrt{1-r+\epsilon^\prime}} \right) \leq p - \delta \,.
\end{equation}
Therefore, for any measure $\zeta$ and $n$ large enough by Propositions~\ref{prop:supx} and \ref{prop:CLTUniformity}
\begin{equation}
  \frac{1}{|\zeta|} \sum_{z \in \zeta} \nu_{m_\epsilon} (\sqrt{n}A - z) 
  \leq \frac{1}{|\zeta|} \sum_{z \in \zeta} \nu \big( \sqrt{\tfrac{n}{m_\epsilon}} A - 	\tfrac{z}{\sqrt{m_\epsilon}}\big) + \frac{\delta}{2}
  \leq p - \frac{\delta}{2} \, .
\end{equation}
Using Lemma~\ref{lem:ConcenNormalPop} we have that $\bbP(\ol{Z}^\zeta_{m_\epsilon}(\sqrt{n}A) \geq p)$ is bounded above by
\begin{equation}
\bbP \left(\bbP(\ol{Z}^\zeta_{m_\epsilon}(\sqrt{n}A) \geq
  \frac{1}{|\zeta|} \sum_{z \in \zeta} \nu_{m_\epsilon} (\sqrt{n}A - z) + \frac{\delta}{2}
  \right) \leq C e^{-C^\prime |\zeta|} \,.
\end{equation}
But if $\zeta$ is a possible particle
measure at generation $q_\epsilon$, then $|\zeta| \geq b^{q_\epsilon}$. Hence from
\eqref{e:33a} we obtain for $n$ large enough,
\begin{equation}
  \bbP(\ol{Z}_{n}(\sqrt{n}A) \geq p) \leq e^{-C b^{q_\epsilon}} \leq 
    \exp \big\{-e^{(\log b)(\tilde{J}_A(p)-\epsilon)n-C^\prime} \big\} \,,
\end{equation}
and since $\epsilon$ is arbitrary the upper bound follows.
\qed

\subsection{Proof of Proposition~\ref{prop:Interpolation}}
\label{sub:countersection}
Let $\alpha \in (1/2, 1)$ and $p \in (0,1)$ be given and choose $a>0$ such that 
$\nu(A_0) = p$ where $A_0 = [-a,+a]$. Fix some small $\delta > 0$ and for any integer $k \geq 1$ set:
\begin{equation}
  x_k = k^{1+\delta} \,,\quad 
  r_k = \sqrt{1 - k^{-\frac{(1-\alpha)(1+\delta)}{\alpha-1/2}}} \,,\quad
  A_k = x_k + r_k \cdot A_0 \,.
\end{equation}
Finally, for some $k_0 > 0$ to be chosen later, set
\begin{equation}
  A = \bigcup_{k={k_0}}^\infty A_k \,.
\end{equation}
We shall now argue that \eqref{eqn:111} is satisfied with the above $A$, $\alpha$ and $p$.

\medskip
{\em Lower bound.}
For any $n$ large enough, set $k=\lceil n^{(\alpha-1/2)/(1+\delta)} \rceil$,
$w = \lfloor x_k \sqrt{n} \rfloor$, $m=n-w$, $\zeta = b^w \delta_w$ and write
\begin{equation}
\label{ce:1}
 \bbP(\ol{Z}_n(\sqrt{n}A) \geq p) \geq
    \bbP(Z_{w} = \zeta) \  \bbP(\ol{Z}^\zeta_{m}(\sqrt n A) \geq p) \, .
\end{equation}
The first factor on the r.h.s. is at least $\exp\{-Cb^w\} \geq \exp \{-b^{n^{\alpha}(1+o(1)}\}$, as the event can be achieved by all particles multiplying at rate $b$ and having their descendants take a $+1$ step for $w$ generations. Therefore, as in the proof of the lower bound in the $I(A) < \infty$ case, it is enough to show that $\bbP(\ol{Z}_m(\sqrt{n}A-w) \geq p)$ is bounded away from $0$ independently of $n$. This, in turn, follows from Lemma~\ref{lem:TypicalDev} since
$\nu \big((\sqrt{n}A-w)/\sqrt{m} \big)$ is bounded below by
\begin{eqnarray}
  \nu \big( \sqrt{\tfrac{n}{m}} (A - x_k) \big) - O(n^{-1/2})  
    & \geq & \nu \big( \big(1-n^{-(1-\alpha)} \big)^{-1/2} (A_k - x_k) \big) - O(n^{-1/2}) \\
    & \geq & \nu \big( \big(1-n^{-(1-\alpha)} \big)^{-1/2} r_k \cdot A_0 \big) - O(n^{-1/2}) \\
    & \geq & \nu(A_0) - O(n^{-1/2}) \\
    & =    & p - O(n^{-1/2}) \,.
\end{eqnarray}

\medskip
{\em Upper bound}. Let $\epsilon > 0$ be arbitrarily small and set
$w_\epsilon = \lfloor (1-\epsilon) n^{\alpha} \rfloor$ and $m_\epsilon = n-w_\epsilon$.
By conditioning on the particles measure in generation $w_\epsilon$, it is clear that 
\begin{equation}
 \bbP(\ol{Z}_{n}(\sqrt{n}A) \geq p) \leq \max_{\zeta}\,
    \bbP (\ol{Z}^\zeta_{m_\epsilon}(\sqrt{n}A) \geq p)
\end{equation}
where the maximum is taken over all feasible particles measures $\zeta$ for generation $w_\epsilon$. For such $\zeta$, we may write 
\begin{align}
  \frac{1}{|\zeta|} \sum_{z \in \zeta} \nu_{m_\epsilon} (\sqrt{n}A - z) 
  &\leq \max_{z \in \zeta} \nu_{m_\epsilon} (\sqrt{n}A - z) \\
  &\leq \max_{z \in \zeta} \nu \big( \sqrt{\tfrac{n}{m_\epsilon}} 
  A - \tfrac{z}{\sqrt{m_\epsilon}}\big) + O(n^{-1/2}) \\
\label{eqn:113}
  &\leq  \max_{|y| \leq (1-\eps)n^{\alpha - 1/2}} 
      \nu \big( \sqrt{\tfrac{n}{m_\epsilon}} (A - y) \big)  + O(n^{-1/2})  \,,
\end{align}
where for the second inequality, we have used 
\begin{equation}
  \limsup_{m \to \infty} \sup_{\rho \in [1/2, 2]} \sup_{\xi \in \bbR}
    \, m^{1/2} |\nu_m(\sqrt{m}(\rho A + \xi)) - \nu(\rho A + \xi)| < \infty \, ,
\end{equation}
which holds for the set $A$ in light of (2.5) of \cite{bhattacharya1975errors}. 

Consider now some $y$ in the range of the maximum in \eqref{eqn:113} and find the index $k$ of the closest point to $y$ among $(x_k)_{k \geq k_0}$. We can then write 
\begin{equation}
\label{eqn:88}
  \sqrt{\tfrac{n}{m_\epsilon}} (A - y) = 
    \sqrt{\tfrac{n}{m_\epsilon}} (A_k - y)\cup
    \sqrt{\tfrac{n}{m_\epsilon}} (A \setminus A_k - y) 
\end{equation}
and bound the Gaussian measure of each set separately. 

The measure of the first set is upper bounded by (using Proposition~\ref{prop:supx})
\begin{eqnarray}
\nu \big( \sqrt{\tfrac{n}{m_\epsilon}} (A_k - x_k) \big)  
  & = & \nu \big( \sqrt{\tfrac{n}{m_\epsilon}} r_k \cdot A_0 \big) \\
  & \leq & \nu(A_0) - C \big(1 - \sqrt{\tfrac{n}{m_\epsilon}} r_k \big) \\
\label{eqn:91}
  & \leq & p - C \left(\big( 1-r_k \big) - \big(\sqrt{\tfrac{n}{m_\epsilon}} - 1 \big) \right) \,. 
\end{eqnarray}
For the second set in \eqref{eqn:88}, note that from the definition of $A$ it follows that
\begin{equation}
  A \setminus A_k - y \subseteq (-C k^\delta, +C k^\delta)^{\rmc} \,,
\end{equation}
for large enough $k$. Then, using a standard bound on the tails of $\nu$, we obtain
\begin{equation}
\label{eqn:92}
  \nu \big(\sqrt{\tfrac{n}{m_\epsilon}} (A \setminus A_k - y)\big)
    \leq C^\prime e^{-Ck^{2\delta}} \,.
\end{equation}
Combining the two bounds, we have
\begin{equation}
\label{eqn:93}
  \nu \big( \sqrt{\tfrac{n}{m_\epsilon}} (A - y) \big)  \leq 
      p - C^{\prime\prime} \left(\big( 1-r_k \big) - C^\prime e^{-C k^{2\delta}} - 
	\big(\sqrt{\tfrac{n}{m_\epsilon}} - 1 \big) \right)
\end{equation}
Now if $k_0$ is chosen large enough, the r.h.s. above is maximized when $k$ is the largest possible. At the same time, the choices of $k$ and $y$ imply
\begin{equation}
  (k-1)^{1+\delta}  < y \leq (1-\epsilon)n^{\alpha - 1/2}
\end{equation}
which gives an upper bound on $k$. Using this in \eqref{eqn:93} we infer that the r.h.s. of \eqref{eqn:113} is bounded above by
\begin{equation}
  p - C \left(  n^{-(1-\alpha)}/2 - (1-\epsilon) n^{-(1-\alpha)}/2 \right) (1-o(1))
  \leq p - C^\prime \epsilon n^{-(1-\alpha)} \, .
\end{equation}

We may now use Lemma~\ref{lem:ConcenNormalPop} and the fact that $|\zeta| \geq b^{w_\epsilon}$
to conclude that 
\begin{equation}
 \bbP (\ol{Z}^\zeta_{m_\epsilon}(\sqrt{n}A) \geq p) \leq  
    C \exp (-C' \epsilon^2 n^{-2(1-\alpha)} b^{(1-\eps)n^{\alpha}}) \,.
\end{equation}
This finishes the proof as $\eps$ was arbitrary.
\qed

\section*{Acknowledgments}
The authors would like to thank Dima Ioffe and Anna Levit for the discussions which gave rise to this problem, Scott Sheffield for fruitful conversations in the early stages of this project and Eve Styles for her interest in the problem.
\bibliographystyle{abbrv}
\bibliography{BRW}

\end{document}